\chardef\coloryes=0 
\chardef\isitdraft=0 
   \def\eqref#1{({\ref{#1}})}                
\definecolor{labelkey}{gray}{.3}
\definecolor{refkey}{rgb}{.3,0.3,0.3}
  \def\startnewsection#1#2{\section{#1}\label{#2}\setcounter{equation}{0}}   
  \def\nnewpage{} 
\begin{document}
\def\ques{{\colr \underline{??????}\colb}}
\def\nto#1{{\colC \footnote{\em \colC #1}}}
\def\fractext#1#2{{#1}/{#2}}
\def\fracsm#1#2{{\textstyle{\frac{#1}{#2}}}}   
\def\nnonumber{}


\def\colr{{}}
\def\colg{{}}
\def\colb{{}}
\def\cole{{}}
\def\colA{{}}
\def\colB{{}}
\def\colC{{}}
\def\colD{{}}
\def\colE{{}}
\def\colF{{}}

\ifnum\coloryes=1

  \definecolor{coloraaaa}{rgb}{0.1,0.2,0.8}
  \definecolor{colorbbbb}{rgb}{0.1,0.7,0.1}
  \definecolor{colorcccc}{rgb}{0.8,0.3,0.9}
  \definecolor{colordddd}{rgb}{0.0,.5,0.0}
  \definecolor{coloreeee}{rgb}{0.8,0.3,0.9}
  \definecolor{colorffff}{rgb}{0.8,0.3,0.9}
  \definecolor{colorgggg}{rgb}{0.5,0.0,0.4}

 \def\colg{\color{colordddd}}
 \def\colb{\color{black}}
 \def\colr{\color{red}}
 \def\cole{\color{colorgggg}}

 \def\colA{\color{coloraaaa}}
 \def\colB{\color{colorbbbb}}
 \def\colC{\color{colorcccc}}
 \def\colD{\color{colordddd}}
 \def\colE{\color{coloreeee}}
 \def\colF{\color{colorffff}}
 \def\colG{\color{colorgggg}}

\fi
\ifnum\isitdraft=1
   \chardef\coloryes=1 
   \baselineskip=17pt
   \input macros.tex
   \def\blackdot{{\color{red}{\hskip-.0truecm\rule[-1mm]{4mm}{4mm}\hskip.2truecm}}\hskip-.3truecm}
   \def\bdot{{\colC {\hskip-.0truecm\rule[-1mm]{4mm}{4mm}\hskip.2truecm}}\hskip-.3truecm}
   \def\purpledot{{\colA{\rule[0mm]{4mm}{4mm}}\colb}}
   \def\pdot{\purpledot}
\else
   \baselineskip=15pt
   \def\blackdot{{\rule[-3mm]{8mm}{8mm}}}
   \def\purpledot{{\rule[-3mm]{8mm}{8mm}}}
   \def\pdot{}
\fi

\def\tdot{\fbox{\fbox{\bf\tiny I'm here; \today \ \currenttime}}}
\def\nts#1{{\hbox{\bf ~#1~}}} 
\def\nts#1{{\colr\hbox{\bf ~#1~}}} 
\def\ntsf#1{\footnote{\hbox{\bf ~#1~}}} 
\def\ntsf#1{\footnote{\colr\hbox{\bf ~#1~}}} 
\def\bigline#1{~\\\hskip2truecm~~~~{#1}{#1}{#1}{#1}{#1}{#1}{#1}{#1}{#1}{#1}{#1}{#1}{#1}{#1}{#1}{#1}{#1}{#1}{#1}{#1}{#1}\\}
\def\biglineb{\bigline{$\downarrow\,$ $\downarrow\,$}}
\def\biglinem{\bigline{---}}
\def\biglinee{\bigline{$\uparrow\,$ $\uparrow\,$}}

\def\tilde{\widetilde}

\newtheorem{Theorem}{Theorem}[section]
\newtheorem{Corollary}[Theorem]{Corollary}
\newtheorem{Proposition}[Theorem]{Proposition}
\newtheorem{Lemma}[Theorem]{Lemma}
\newtheorem{Remark}[Theorem]{Remark}
\newtheorem{definition}{Definition}[section]
\def\theequation{\thesection.\arabic{equation}}
\def\endproof{\hfill$\Box$\\}
\def\square{\hfill$\Box$\\}
\def\comma{ {\rm ,\qquad{}} }            
\def\commaone{ {\rm ,\qquad{}} }         
\def\dist{\mathop{\rm dist}\nolimits}    
\def\sgn{\mathop{\rm sgn\,}\nolimits}    
\def\Tr{\mathop{\rm Tr}\nolimits}    
\def\div{\mathop{\rm div}\nolimits}    
\def\supp{\mathop{\rm supp}\nolimits}    
\def\divtwo{\mathop{{\rm div}_2\,}\nolimits}    
\def\re{\mathop{\rm {\mathbb R}e}\nolimits}    

\def\indeq{\qquad{}}                     
\def\period{.}                           
\def\semicolon{\,;}                      

\title{A local asymptotic expansion for a local solution of the Stokes system}
\author{G\"{u}her \c{C}aml{\i}yurt and Igor Kukavica}
\maketitle

\date{}


\medskip

\indent Department of Mathematics, University of Southern California, Los Angeles, CA 90089\\
\indent e-mails: camliyur@usc.edu, kukavica@usc.edu

\begin{abstract}
We consider solutions of the Stokes system in a neighborhood of a
point in which the velocity $u$ vanishes of order $d$. We prove that there
exists a divergence-free polynomial $P$ in $x$ with $t$-dependent coefficients 
of degree $d$ which approximates the solution $u$ of order $d+\alpha$
for certain $\alpha>0$.
The polynomial $P$ satisfies a Stokes equation with a forcing term which
is a sum of two polynomials in $x$ of degrees $d-1$ and $d$.
The results extend to Oseen systems and to the Navier-Stokes equation.
\end{abstract}



\startnewsection{Introduction}{sec1} 
In this paper, we study local asymptotic development of solutions of
local solutions for the Stokes equation in the unit cylinder.
Namely, given $f = (f_1 (x,t), f_2(x,t), \ldots, f_n(x,t))$ we seek 
a polynomial in $x$ which approximates a solution 
$
   u = (u_1(x,t), u_2 (x,t), \ldots, u_n (x,t))   
$
of the system
  \begin{align}
   & u_t - \triangle u + \nabla p = f, 
    \label{EQ11}\\
  & \nabla \cdot u =0,
   \label{EQ12}
  \end{align}
around a point where the solution vanishes
of order $d$. The solution is not assumed to
have a high degree of regularity and thus the Taylor expansion
is not available.
Replacing the force with a matrix of functions in the divergence form
we also obtain development for 
solutions of the Navier-Stokes equations around a vanishing point as a consequence.

Fabre and Lebeau in \cite{FL1,FL2} showed that the system
\eqref{EQ11}--\eqref{EQ12} has a unique continuation property,
i.e., local solutions of \eqref{EQ11}--\eqref{EQ12}
can not vanish to infinite order unless they vanish identically.
Having a priori estimates on solutions with respect to their vanishing
order is considered a crucial step in many applications. 
For instance, using a priori estimates on asymptotic polynomials Han
\cite{H2} improves the classical Schauder estimates in a way 
that the estimates of solutions and their derivatives at one point
depend on the coefficient
and the nonhomogeneous terms at that particular point. 
Also, 
Hardt and Simon \cite{HS} applied an estimate of Donnelly and Fefferman for  
the order of vanishing of eigenfunctions to find an
asymptotic bound of the $(n-1)$-dimensional measure 
of $v_j^{-1} \{0\}$, where $v_j$
is an eigenfunction 
corresponding to the $j$-th eigenvalue of the Laplacian on a compact
Riemannian manifold.
  
The method we use in proving the main theorem was introduced by
Q.~Han, who  in \cite{H2} found an asymptotic 
development of a solution of a parabolic equation
of an arbitrary degree (cf.~also \cite{H1} for the elliptic case).
The main idea in \cite{H2} is based on a local expansion
of the corresponding fundamental solution of the global
linear equation.

There are several key difficulties when trying to extend the results to
the Stokes equation \eqref{EQ11}--\eqref{EQ12}.
First,
due to  presence of the pressure, it is not reasonable
to expect that the velocity and the pressure would vanish at the same
point
(for instance, the unique continuation result of Fabre and Lebeau
gives a unique continuation property for $u$ and not for the pair
$(u,p)$).
Thus in our main result we do require $p$ to vanish.
The second difficulty is the lack of smoothing in the time variable
in the system, which is
a well-known problem for local solutions of the Stokes and Navier-Stokes
systems.
Indeed, taking the divergence of the evolution equation for the
velocity gives
   \begin{align}
    & \triangle p = \nabla \cdot f\period 
  \label{EQ14}
  \end{align}  
which does not contain any smoothing in the time variable.
The third difficulty is the nonlocal nature of the Stokes kernel,
which in particular causes the Stokes kernel to decay polynomially,
rather than exponentially as it is the case for the scalar equations.

We note here that there have been many works on unique continuation of
elliptic and parabolic equations 
showing that, under various assumptions on coefficients,
no solution can vanish to infinite order
(cf.~\cite{AE,AMRV,CRV,DF,EFV,EV,GL,JK,KT,SS1,SS2} for instance);
for more complete reviews, see \cite{K1,K2,V}.
Unique continuation questions for the Stokes and Navier-Stokes
systems were addressed in \cite{CK,FL1,FL2,Ku}.

The paper is organized as follows. In Section~\ref{sec2}, we state the
main results, Theorem~\ref{T01} and~\ref{T02}, addressing
the forces in standard and divergence forms respectively.
We also state the two corollaries concerning the Navier-Stokes and
Oseen systems. In Section~\ref{sec3} we recall the properties of the
Stokes kernel, while the last part contains a construction of a
particular solution vanishing of order $d$ as well as the proof of
Theorem~\ref{T01}. 


%
%

\startnewsection{Notation and the main result on the asymptotic expansion }{sec2}
In this paper, we consider a solution $(u,p)$ of the Stokes system
\eqref{EQ11}--\eqref{EQ12} in an open set containing $(0,0)$ (which can
always be assumed using translation).
For any $(x,t) \in \mathbb{R}^n \times \mathbb{R}$ and $r>0$ we denote
the parabolic cylinder
label by $(x,t)$ with radius $r>0$ by
  \begin{align*}
   & Q_r (x,t) = \bigl\{ (y,s) \in \mathbb{R}^n \times \mathbb{R}: 
         |y-x|<r, -r^2 < s-t<0 
                 \bigr\}\period
  \end{align*}
The corresponding parabolic 
norm for $(x,t) \in \mathbb{R}^n \times \mathbb{R}$ is given by 
  \begin{align*}
   & |(x,t)|= (|x|^2 + |t|)^{1/2}\period
  \end{align*}
Denote by $W^{m,1}_{q} (Q_1)$  the Sobolev space of 
$L^{q}(Q_{1})$
functions whose all the $x$-derivatives up to 
$m$-th order and $t$-derivative of first order belong to $L^q(Q_1)$.

\cole
\begin{Theorem}
\label{T01}
Let $q> 1+ n/2$. 
Suppose that $f_j \in L^q (Q_1)$, for $j = 1,2, \ldots,n$ satisfy
  \begin{equation}
  \|f_j\|_{L^q (Q_r)} \leq \gamma r^{d-2+\alpha + {n+2}/ q} 
   \comma r\leq 1
   \label{EQ15}
  \end{equation}
for some constants $\gamma>0$ and $\alpha \in (0,1)$, where $d\geq 2$
is an integer. Then for any solution 
$u= (u_1, \ldots, u_n)\in W_{q}^{2,1}(Q_{1})$
of \eqref{EQ11}--\eqref{EQ12}
there exists $P= (P_{d,t}^1 , \ldots, P_{d,t}^n)$, 
whose each component $P_{d,t}^j$ is a polynomial 
in $x$ of degree less than or equal to $d$,
such that
  \begin{align}
   |u_j (x,t) - P_{d,t}^j (x,t)| \leq C \left( \gamma + \sum_{k=1}^n \| u_k\|_{W^{2,1}_q (Q_1)} \right) |(x,t)|^{d+ \alpha}
   \comma (x,t) \in Q_{1/2},
  \end{align}
where $C$ is a positive constant depending on $n$, $q$, $d$, and $\alpha$.
Moreover, $P$ satisfies the Stokes system
  \begin{align}
   &\partial_t P^j_{d,t} (x) - \triangle P^j_{d,t} (x)  +  \partial_j R (x)
    =\sum_{i=d-1}^{d} \sum_{|\alpha|=i} C_{\alpha, t}^{j} \frac{x^{\alpha}}{\alpha !}   
   \comma j=1,\ldots,d
  \label{EQ16} 
  \\\indeq &
  \nabla\cdot P  = 0, 
  \label{EQ01}
  \end{align}
where $R$ is the corresponding pressure. 
\end{Theorem}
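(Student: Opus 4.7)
The plan is to decompose the solution as $u=v+w$, where $v$ is a particular solution of the inhomogeneous Stokes system \eqref{EQ11}--\eqref{EQ12} built from the Stokes fundamental solution and arranged so that $|v(x,t)|\leq C\gamma|(x,t)|^{d+\alpha}$ on $Q_{1/2}$, while the remainder $w=u-v$ satisfies a homogeneous Stokes system in $Q_{1/2}$. The polynomial $P$ is then obtained as the Taylor polynomial in $x$ of degree $d$ of $w$ (with $t$-dependent coefficients) plus the polynomial piece produced during the construction of $v$.

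To construct $v$, I would cut $f$ off against a smooth function $\chi$ equal to $1$ on $Q_{3/4}$ and convolve $\chi f$ against the Stokes kernel $\Gamma$ recalled in Section~\ref{sec3}. Since $\Gamma$ decays only polynomially in space, the convolution does not by itself vanish to order $d+\alpha$ at the origin, so I would subtract the parabolic Taylor polynomial in $x$ of degree $d$ of the convolution, which produces a divergence-free $t$-dependent polynomial of degree at most $d$ (and is absorbed into $P$). The pointwise bound on the remainder follows from a dyadic decomposition of $\chi f$ into pieces supported in the parabolic annuli $Q_{2^{-k}}\setminus Q_{2^{-k-1}}$: H\"older's inequality combined with the hypothesis \eqref{EQ15} giving $\|f\|_{L^q}\leq C\gamma (2^{-k})^{d-2+\alpha+(n+2)/q}$ on each annulus, together with the scaling bounds for $\Gamma$ and its $x$-derivatives, reduces the estimate at $(x,t)$ to a convergent geometric series of total size $C\gamma|(x,t)|^{d+\alpha}$, the near-field annuli $|(y,s)|\lesssim|(x,t)|$ being handled directly and the far-field annuli via the Taylor-remainder bound for $\Gamma$.

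With $v$ in hand, the residual $w=u-v$ solves a homogeneous Stokes system in $Q_{1/2}$, so interior regularity yields $\|\partial_x^\alpha w\|_{L^\infty(Q_{1/4})}\leq C\bigl(\gamma+\sum_k\|u_k\|_{W^{2,1}_q(Q_1)}\bigr)$ for $|\alpha|\leq d+1$. Setting $P^j_{d,t}(x)=\sum_{|\alpha|\leq d}\partial_x^\alpha w^j(0,t)\,x^\alpha/\alpha!$ plus the polynomial correction from $v$, the Taylor remainder gives $|w-P|\leq C|x|^{d+1}\leq C|(x,t)|^{d+\alpha}$ on $Q_{1/2}$, and combined with the bound on $v$ this yields $|u-P|\leq C|(x,t)|^{d+\alpha}$ as required. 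The divergence-free property \eqref{EQ01} follows by taking the divergence of the degree-$d$ Taylor truncation of $w$ together with the polynomial part of $v$. Finally, to verify \eqref{EQ16}, I use $w_t-\Delta w=-\nabla\tilde p$ and truncate in $x$: since $\partial_t$ preserves the polynomial degree while $\Delta$ lowers it by two, $\partial_tP-\Delta P+\nabla R$, with $R$ the degree-$(d+1)$ Taylor polynomial of $\tilde p$, picks up exactly the degree-$(d-1)$ and degree-$d$ homogeneous parts of the Taylor expansion of $\Delta w$ at $(0,t)$, producing the stated polynomial right-hand side.

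The main obstacle is the construction and estimation of $v$, which is more delicate than in the scalar parabolic setting of \cite{H2} precisely because the Stokes kernel decays only polynomially in space. The Taylor subtraction at the level of $\Gamma$ must be shown to yield a genuine divergence-free polynomial of degree at most $d$ with $t$-dependent coefficients, and convergence of the dyadic sum requires a careful balance between the algebraic decay of $\Gamma$ and the growth of the shell norms of $f$ permitted by \eqref{EQ15}; once these estimates are in place, the remaining checks reduce to bookkeeping with Taylor truncations.
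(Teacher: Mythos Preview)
Your overall strategy matches the paper's: construct a particular solution $\tilde u$ as the convolution of $f$ with the Stokes kernel $K_{jk}$ minus its degree-$d$ parabolic Taylor polynomial, estimate it by a dyadic splitting (exactly the integrals $I_1$, $I_2$, $I_3$ of Lemma~\ref{L01}), and then take the degree-$d$ Taylor polynomial in $x$ of the remainder $U=u-\tilde u$, which solves a homogeneous Stokes system.

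The one place where you hand-wave and the paper is explicit is the step ``interior regularity yields $\|\partial_x^\alpha w\|_{L^\infty(Q_{1/4})}\leq C(\ldots)$ for $|\alpha|\le d+1$.'' For \emph{local} solutions of the homogeneous Stokes system there is no time smoothing---the pressure equation \eqref{EQ14} carries none, and Serrin-type examples show the pair $(w,\tilde p)$ can be arbitrarily irregular in $t$---so parabolic interior regularity is not available as a black box. The paper resolves this by passing to the vorticity $W=\nabla\times U$, which satisfies the genuine heat equation $W_t-\Delta W=0$ and therefore enjoys full interior estimates \eqref{EQ18}; then the identity $\Delta U_k=\partial_j W_{jk}$ (from $\nabla\cdot U=0$) together with elliptic regularity in $x$ at each fixed $t$ recovers the needed $L^\infty_t L^q_x$ control of $D_x^\alpha U_k$. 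Your outline goes through once this reduction is written out. Two minor corrections: the ``polynomial correction from $v$'' you add to $P$ is redundant, since your $v$ already vanishes to order $d+\alpha$ and hence has zero degree-$d$ Taylor polynomial---$P$ is simply the Taylor polynomial of $w$; and in \eqref{EQ16} the pressure $R$ should be the Taylor polynomial of $p-\tilde p$ of degree about $d-1$, not $d+1$, so that $\nabla R$ lives in degrees $\le d-2$ and only the degree $d-1$ and $d$ pieces of $\partial_t P$ survive on the right-hand side.
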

\colb

\begin{Remark}
{\rm
The pressure term is found explicitly in the 
proof of the main theorem and is given by 
  \begin{align*}
   R(x,t) 
    = \sum_{i=0}^{d-2} \sum_{|\alpha| =i} D^{\alpha}_x (p- \triangle^{-1} \nabla \cdot f)
  \end{align*}
for $(x,t)\in Q_{1}$.
}
\square
\end{Remark}

As we are interested in obtaining estimates in $Q_1$, we assume 
without loss of generality that
  \begin{equation}
   f(x,t) = 0
   \comma |(x,t)|\ge1
   \period
   \label{EQ37}
  \end{equation}

In the case when the function on 
the right side of \eqref{EQ11} is in the divergence form, the Stokes system reads as
  \begin{align}
   &
   \partial_t u_k - \triangle u_k + \partial_k p = \partial_j g_{jk}
   \comma k=1,\ldots, n 
   \label{EQ08}
   \\ & 
   \nabla \cdot u =0, 
   \label{EQ02}
  \end{align}
for some function 
$g = (g_{jk})_{j,k=1}^{n} \in W_q^{1,0} (Q_1)$. 
Here also we may assume without loss of generality that
  \begin{equation}
   g(x,t) = 0
   \comma |(x,t)|>1
   \period
   \label{EQ51}
  \end{equation}
Then we have the following variant of Theorem~\ref{T01}.

\cole
\begin{Theorem} 
\label{T02}
Assume that $q> 1+ n/2$. 
Let $g= [g_{jk}]\in W_{q}^{2,1}$ be an $n\times n$ matrix of functions
that satisfies
  \begin{align}
   |g_{jk} (x,t)| \leq \gamma |(x,t)|^{d-1+\alpha}
   \comma  |(x,t)|<1,
   \commaone j,k=1,\ldots,n
  \end{align}  
for some constants $\gamma >0$ and $\alpha \in (0,1)$. 
Then for any solution $u = (u_1, u_2, \ldots, u_n)\in W_{q}^{2,1}$ of 
\eqref{EQ08}--\eqref{EQ02} there exists $P= (P_{d,t}^1 , \ldots, P_{d,t}^n)$ whose each component $P_{d,t}^j$ is a polynomial in $x$ of degree less than or equal to $d$ such that
  \begin{align}
   |u_j (x,t) - P_{d,t}^j (x,t)| 
       \leq 
        C \left(
             \gamma + \sum_{k=1}^n \| u_k\|_{W^{2,1}_q (Q_1)}
          \right) |(x,t)|^{d+ \alpha}, 
  \end{align}
for any $(x,t) \in Q_{1/2}$, where $C$ is a positive constant depending on $n$, $q$, $d$, $\alpha$. Also, $P$ satisfies the Stokes system
  \begin{align}
       &\partial_t P_{d,t}^j - \triangle P_{d,t}^j  + \partial_j R (x) 
          = \sum_{i=d-1}^{d} \sum_{|\alpha|=i} C_{\alpha, t} \frac{x^{\alpha}}{\alpha !}   
   \comma j=1,\ldots,n
   \label{EQ03}
  \\\indeq &
    \nabla\cdot P  = 0  
   \label{EQ04}
\end{align}  
where $R$ is the corresponding pressure.
\end{Theorem}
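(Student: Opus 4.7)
The plan is to follow the proof of Theorem~\ref{T01} from Section~\ref{sec3}, with the modification that the volume-potential representation of $u$ is integrated by parts so that the spatial derivative falls on the Stokes kernel $K$ rather than on $g$. I would write $u=v+w$, where
\begin{equation*}
v_k(x,t)=\int K_{kj}(x-y,t-s)\,\partial_{y_j}g_{jk}(y,s)\,dy\,ds
\end{equation*}
is the Stokes volume potential associated to the divergence-form forcing, and $w=u-v$ solves a homogeneous Stokes system on $Q_1$ and hence enjoys the same interior $W^{2,1}_q$ regularity and Taylor-polynomial approximation as in the proof of Theorem~\ref{T01}. The compact support assumption \eqref{EQ51} on $g$ allows integration by parts in $y_j$ with no boundary contribution, giving
\begin{equation*}
v_k(x,t)=-\int\partial_{y_j}K_{kj}(x-y,t-s)\,g_{jk}(y,s)\,dy\,ds.
\end{equation*}

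The polynomial $P$ is then extracted by Taylor expanding $\partial_{y_j}K_{kj}(x-y,t-s)$ in the $x$-variable about $x=0$ up to order $d$, exactly as in the proof of Theorem~\ref{T01} but with the once-differentiated kernel $\nabla K$ in place of $K$. The coefficients are the moments $\int\partial^\alpha_x\partial_{x_j}K_{kj}(-y,t-s)\,g_{jk}(y,s)\,dy\,ds$, which assemble into an $x$-polynomial of degree at most $d$ with $t$-dependent coefficients. The scaling accommodates the change of kernel: the hypothesis $|g_{jk}|\leq\gamma|(x,t)|^{d-1+\alpha}$ yields $\|g\|_{L^q(Q_r)}\lesssim\gamma r^{d-1+\alpha+(n+2)/q}$, which is one power of $r$ better than the corresponding bound on $f$ in \eqref{EQ15}, precisely compensating the extra $x$-derivative on the kernel. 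Consequently, the remainder estimates from the proof of Theorem~\ref{T01} transcribe to give the error bound $|u_j-P_{d,t}^j|\lesssim(\gamma+\sum_k\|u_k\|_{W^{2,1}_q(Q_1)})|(x,t)|^{d+\alpha}$ on $Q_{1/2}$.

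For the equation satisfied by $P$, I would apply $(\partial_t-\triangle)$ to the polynomial representation of $v$ and use the defining property of $K$, which reproduces $\partial_{y_j}g_{jk}$ up to a pressure gradient. The Taylor polynomial of the forcing truncates to degrees $d-1$ and $d$ because the pointwise vanishing $|\partial_jg_{jk}|\sim|(x,t)|^{d-2+\alpha}$ with $\alpha\in(0,1)$ forces the Taylor coefficients of orders strictly less than $d-1$ to vanish. This yields \eqref{EQ03} with the right-hand side supported on degrees $d-1$ and $d$, while the divergence-free condition \eqref{EQ04} is inherited from the solenoidal property of each column of the Stokes kernel. The pressure $R$ is read off from the corresponding Taylor expansion of $p-\triangle^{-1}\nabla\cdot(\nabla\cdot g)$.

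The main obstacle will be the singular-integral step: $\nabla K$ has one additional order of singularity along the diagonal compared to $K$, so the Calder\'on--Zygmund estimates and the kernel-asymptotic expansion from Section~\ref{sec3} must be redone with this sharper singularity. One must verify the $L^q(Q_1)$-boundedness of the operator $g\mapsto\nabla K\ast g$, together with the splitting of $\nabla K$ into its singular part near $(y,s)=(x,t)$ and its regular far-field part, and then check that the resulting remainder decay matches the exponents in the proof of Theorem~\ref{T01}. Once these kernel estimates are in place, the remainder of the argument is a direct transcription of the proof of Theorem~\ref{T01}.
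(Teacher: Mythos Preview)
Your approach is the one the paper intends: it states that the proof of Theorem~\ref{T02} follows that of Theorem~\ref{T01} and omits the details, and integrating by parts to place the derivative on $K$, then Taylor-expanding $\nabla K$ in $x$ to order $d$, is the natural modification.

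Two points deserve care. First, your scaling heuristic---that the extra power of $r$ in $\|g\|_{L^q(Q_r)}$ exactly compensates the extra derivative on $K$---is dimensionally correct, but if you try to estimate the near-diagonal piece $I_1$ exactly as in Lemma~\ref{L01} (H\"older with $\nabla K$ in $L^{q'}$), you would need $\int_{|(y,s)|<r}|(y,s)|^{-(n+1)q'}\,dy\,ds<\infty$, which forces $q>n+2$ rather than merely $q>1+n/2$. The fix is to use the \emph{pointwise} hypothesis $|g|\le\gamma|(y,s)|^{d-1+\alpha}$ directly: then
\[
|I_1|\lesssim\gamma\,|(x,t)|^{d-1+\alpha}\int_{|(y,s)|<2|(x,t)|}\frac{dy\,ds}{|(x-y,t-s)|^{n+1}}\lesssim\gamma\,|(x,t)|^{d+\alpha},
\]
since $|(z,\tau)|^{-(n+1)}$ is locally integrable in parabolic measure. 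The same adjustment handles $I_2$ and $I_3$; the $L^q$ bound you wrote down is not actually what drives the estimate here.

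Second, your explanation for why the right side of \eqref{EQ03} is supported on degrees $d-1$ and $d$ is not correct: you have no pointwise bound on $\partial_j g_{jk}$, only on $g_{jk}$. The reason is structural, exactly as in the proof of Theorem~\ref{T01}: $P$ is the degree-$d$ Taylor polynomial in $x$ of a function $U=u-\tilde u$ solving a \emph{homogeneous} Stokes system, and applying $\partial_t-\triangle$ to that polynomial, the coefficients at degrees $\le d-2$ assemble into $-\partial_k R$ via the equation $\partial_t U-\triangle U=-\nabla(p-\tilde p)$, while only the $\partial_t$-contributions at degrees $d-1$ and $d$ remain on the right because $\triangle$ lowers degree by two.
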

\colb

Having a force in 
divergence form on the right side of \eqref{EQ02} allows us to apply the above results to the solutions of the Navier-Stokes equations.

\cole
\begin{Corollary} 
\label{C01}
Let $q> 1+ n/2$. Suppose  that
$u= (u_1, \ldots, u_n) \in W^{2,1}_q(Q_1)$
solves the Navier-Stokes equations
  \begin{align}
   & \partial_t u 
     - \triangle u 
     + \nabla(u \otimes u)  
     + \nabla p= 0,
   \label{EQ05}
 \\
& \nabla \cdot u =0
   \period
   \label{EQ06}
\end{align}
Also, assume that $u$ vanishes of the order at least $d\geq 2$. Then there exists $P= (P^1_{d,t}, \ldots, P^n_{d,t})$ whose each component $P^j_{d,t}$ is a polynomial in $x$ of degree less than or equal to $d$ such that 
  \begin{align}
   |u_j (x,t)- P^j_{d,t} (x)| \leq C |(x,t)|^{d+1}\comma (x,t) \in Q_{1/2},
   \label{EQ19}
\end{align}
for $j=1,\ldots, n$, where $C$ is a positive constant depending on $n$, $d$, $q$, and $u$. Moreover, $P$ satisfies the Stokes system
  \begin{align}
   &\partial_t P^j_d - \triangle P^j_d   + \partial_j R (x) = \sum_{i=d-1}^{d} \sum_{|\alpha|=i} C_{\alpha, t} \frac{x^{\alpha}}{\alpha !} 
   \comma j=1,\ldots,n
   \label{EQ20}
    \\&\nabla\cdot P  =0,
   \label{EQ28}
\end{align}
where $R$ a suitable pressure term depending on $u$ and $p$. 
\end{Corollary}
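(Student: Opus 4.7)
The strategy is to recast the Navier--Stokes system \eqref{EQ05}--\eqref{EQ06} as a divergence-form Stokes system and invoke Theorem~\ref{T02}. Using $\nabla\cdot u=0$, the nonlinearity is $(u\cdot\nabla)u_{k}=\partial_{j}(u_{j}u_{k})$, so $u$ satisfies \eqref{EQ08}--\eqref{EQ02} with $g_{jk}=-u_{j}u_{k}$. Since $q>1+n/2$, the parabolic Sobolev embedding yields $u\in L^{\infty}(Q_{1})$ with norm bounded by $\|u\|_{W^{2,1}_{q}(Q_{1})}$, and the standard parabolic product estimates show that $g=-u\otimes u$ has the regularity required by Theorem~\ref{T02}.

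Next, the vanishing of $u$ to order at least $d$ at $(0,0)$, combined with the $L^{\infty}$ bound above, gives $|u(x,t)|\le C\|u\|_{W^{2,1}_{q}(Q_{1})}|(x,t)|^{d}$ on $Q_{1}$, and hence
\begin{equation*}
 |g_{jk}(x,t)| \le C\|u\|_{W^{2,1}_{q}(Q_{1})}^{2}\,|(x,t)|^{2d}
 \comma (x,t)\in Q_{1}\period
\end{equation*}
For any fixed $\tilde\alpha\in(0,1)$, since $d\ge 2$ we have $2d\ge d+\tilde\alpha$, so $|g_{jk}(x,t)|\le\gamma|(x,t)|^{(d+1)-1+\tilde\alpha}$ with $\gamma=C\|u\|_{W^{2,1}_{q}(Q_{1})}^{2}$. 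I would then apply Theorem~\ref{T02} with the polynomial order parameter replaced by $d+1$ (which is admissible since $d+1\ge 2$) to obtain a polynomial $\widetilde P=(\widetilde P^{j}_{d+1,t})_{j=1}^{n}$ of degree $\le d+1$ in $x$ together with an associated pressure $\widetilde R$, such that $\widetilde P$ is divergence free, satisfies a Stokes system of the form \eqref{EQ03} with right-hand side supported on monomials of degrees $d$ and $d+1$, and
\begin{equation*}
 |u_{j}(x,t)-\widetilde P^{j}_{d+1,t}(x)|\le C|(x,t)|^{d+1+\tilde\alpha}
 \comma (x,t)\in Q_{1/2}\period
\end{equation*}

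Now decompose $\widetilde P=P+H$, where $P$ collects the monomials of degree $\le d$ and $H$ is the homogeneous part of degree $d+1$. Because $\nabla\cdot P$ has degree $\le d-1$ while $\nabla\cdot H$ is homogeneous of degree $d$, the identity $\nabla\cdot\widetilde P=0$ forces $\nabla\cdot P=\nabla\cdot H=0$ separately, giving \eqref{EQ28}. Since $|H(x)|\le C|x|^{d+1}\le C|(x,t)|^{d+1}$ and $|(x,t)|^{d+1+\tilde\alpha}\le|(x,t)|^{d+1}$ on $Q_{1/2}$, the triangle inequality yields \eqref{EQ19}. The equation \eqref{EQ20} is then obtained by subtracting $(\partial_{t}-\triangle)H$ from the Stokes system for $\widetilde P$: the contribution $\partial_{t}H$, homogeneous of degree $d+1$, is absorbed together with the degree $d+1$ block of the right-hand side into a redefined pressure $R$, while $-\triangle H$, homogeneous of degree $d-1$, supplies the $|\alpha|=d-1$ block appearing in \eqref{EQ20}.

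The principal technical obstacle is to verify the regularity of the quadratic term $g=-u\otimes u$ in the space required by Theorem~\ref{T02} from the assumption $u\in W^{2,1}_{q}(Q_{1})$ with $q>1+n/2$, since $g$ contains terms of type $|\nabla u|^{2}$ that must be controlled in $L^{q}$ by a further parabolic Sobolev inequality. Once this regularity and the pointwise vanishing bound for $u$ are in place, the reduction from $\widetilde P$ to $P$ amounts to routine polynomial bookkeeping by matching homogeneous degrees.
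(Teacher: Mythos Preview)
Your approach is exactly the one the paper intends: write the nonlinearity in divergence form, $g_{jk}=-u_{j}u_{k}$, and invoke Theorem~\ref{T02}. The paper does not give a separate proof of the corollary beyond this reduction, and your choice to apply Theorem~\ref{T02} at order $d+1$ and then truncate is a clean way to obtain the exponent $d+1$ rather than $d+\alpha$.

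There is, however, a real gap in your last paragraph. You assert that $\partial_{t}H$ together with the degree-$(d{+}1)$ block of the right-hand side can be ``absorbed into a redefined pressure $R$''. That would require this combination to be a gradient in $x$, and you give no reason why it should be. The repair is simpler than what you wrote and requires no change of pressure at all: since $P$ has degree at most $d$, the left-hand side $\partial_{t}P-\triangle P+\partial_{j}\widetilde R$ has degree at most $d$ in $x$ (recall from the construction in the proof of Theorem~\ref{T01} that $\widetilde R$, at order $d+1$, has degree at most $d-1$). Matching the degree-$(d{+}1)$ parts in
\[
\partial_{t}P-\triangle P+\partial_{j}\widetilde R
=(\text{RHS for }\widetilde P)-\partial_{t}H+\triangle H
\]
therefore forces $\partial_{t}H$ and the $|\alpha|=d+1$ block to cancel \emph{identically}. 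What remains is $\triangle H$ (homogeneous of degree $d-1$) plus the degree-$d$ block, which is precisely \eqref{EQ20} with $R=\widetilde R$.

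On the regularity point you flag: the proof of Theorem~\ref{T02}, which parallels Lemma~\ref{L01}, in fact only uses $\partial_{j}g_{jk}\in L^{q}$ together with the pointwise vanishing hypothesis on $g$; the text preceding the theorem already states $g\in W^{1,0}_{q}(Q_{1})$. For $u\in W^{2,1}_{q}(Q_{1})$ with $q>1+n/2$ one has $u\in L^{\infty}$, so $\partial_{j}(u_{j}u_{k})=u_{j}\partial_{j}u_{k}\in L^{q}$ immediately, and the $|\nabla u|^{2}$ terms you worry about do not arise.
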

\colb

We note that $u$ is not assumed to be smooth in the space or time variable. 
Therefore, the inequality in
\eqref{EQ19} can not be obtained by expanding the solution in the Taylor series. 

The result in Theorem~\ref{T01} can also be applied to the Oseen
system considered
in \cite{FL1}. 
\cole
\begin{Corollary} 
\label{C02}
Let $q> 1 + n/2.$ Suppose  that
$u = (u_1, \ldots, u_n) \in W^{2,1} _q (Q_1)$ solves the Oseen system
\begin{align}
& \partial_t u - \triangle u + (a.\nabla) u + \nabla p  =0  
\comma j= 1, \ldots,n  \label{EQ52} \\
& \nabla \cdot u =0 
\period
\label{EQ53}
\end{align}
where $a= (a_1, \ldots, a_n)\in L^{\infty} (Q_1)$.  
Also, assume that $u$ vanishes of the order at least $d \geq 2$. 
Then there exists $P= (P^1_{d,t}, \ldots, P^n_{d,t})$ 
whose each component $P^j_{d,t}$ is a polynomial in $x$ of degree less than or equal to $d$ such that 
  \begin{align}
   |u_j (x,t)- P^j_{d,t} (x)| \leq C |(x,t)|^{d+\alpha}\comma (x,t) \in Q_{1/2},
   \label{EQ54}
\end{align}
for $j=1,\ldots, n$, where $\alpha \in (0,1)$ and $C$ is a positive constant depending on $n$, $d$, $q$, $\alpha$, and $u$. Moreover, $P$ satisfies the Stokes system
  \begin{align}
   &\partial_t P^j_d - \triangle P^j_d   + \partial_j R (x) = \sum_{i=d-1}^{d} \sum_{|\beta|=i} C_{\beta, t} \frac{x^{\beta}}{\beta !} 
   \comma j=1,\ldots,n
   \label{EQ55}
    \\&\nabla\cdot P  =0,
   \label{EQ56}
\end{align}
where $R$ a suitable pressure term depending on $u$ and $p$. 
\end{Corollary}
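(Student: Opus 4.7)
The plan is to reduce Corollary~\ref{C02} to Theorem~\ref{T01} by treating the Oseen drift as a forcing term. Setting $f = -(a \cdot \nabla) u$, the pair $(u,p)$ solves the Stokes system \eqref{EQ11}--\eqref{EQ12} with this choice of $f$. All that needs to be checked is that $f$ satisfies the decay hypothesis \eqref{EQ15} for some $\gamma$ and some $\alpha \in (0,1)$; once this is done, Theorem~\ref{T01} directly produces the polynomial $P$, the estimate \eqref{EQ54}, and the Stokes-type system \eqref{EQ55}--\eqref{EQ56}.

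Since $a \in L^{\infty}(Q_1)$, the pointwise estimate $|f| \leq \|a\|_{L^{\infty}} |\nabla u|$ reduces the problem to establishing $\|\nabla u\|_{L^q(Q_r)} \leq C r^{d-1 + (n+2)/q}$ for $r \leq 1$. I would obtain this by a finite induction on $k = 2, 3, \ldots, d$, each step asserting the existence of a polynomial $P_k$ of degree at most $k$ with $|u(x,t) - P_k(x,t)| \leq C |(x,t)|^{k+\alpha}$ together with the matching derivative bound $\|\nabla u\|_{L^q(Q_r)} \leq C r^{k-1 + (n+2)/q}$ for $r \leq 1$. The derivative estimate at level $k$ follows from the pointwise approximation for $u - P_k$ by applying interior parabolic regularity for the Stokes system to the shifted unknown $u - P_k$ on concentric cylinders and rescaling. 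Granting the $k$-th step, the pointwise bound on $f$ combined with the $k$-th derivative estimate yields $\|f\|_{L^q(Q_r)} \leq C r^{k-1 + (n+2)/q}$, which is precisely the input needed to apply Theorem~\ref{T01} at the next order $k+1$ (after relabeling and using the slack $\alpha < 1$), producing $P_{k+1}$ and closing the induction. At the top level $k = d$ this yields $P$ satisfying \eqref{EQ54}, and \eqref{EQ55}--\eqref{EQ56} are the conclusions \eqref{EQ16}--\eqref{EQ01} of Theorem~\ref{T01}, with the contribution of the drift $(a \cdot \nabla) u$ absorbed into the pressure $R$.

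The main obstacle is precisely this bootstrap: the required decay of $\|f\|_{L^q(Q_r)}$ depends on $\nabla u$, which is only accessible after invoking parabolic regularity for an equation whose right-hand side contains $f$ itself. The resolution rests on the gap of $1 - \alpha$ between the estimate produced at step $k$ (scaling like $r^{k-1 + (n+2)/q}$) and the hypothesis needed at step $k+1$ (scaling like $r^{k-1 + \alpha + (n+2)/q}$), which Theorem~\ref{T01} deliberately leaves open in the exponent so that such an iteration can close.
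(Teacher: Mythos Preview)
The paper gives no explicit proof of Corollary~\ref{C02} (nor of Corollary~\ref{C01}); both are simply listed as corollaries, with only the sentence preceding the statement (``The result in Theorem~\ref{T01} can also be applied to the Oseen system'') as indication of the intended argument. Your reduction---setting $f=-(a\cdot\nabla)u$ and invoking Theorem~\ref{T01}---is exactly that intended route.

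Your bootstrap is correct in spirit but more elaborate than the hypothesis requires. Since $u$ is \emph{assumed} to vanish to order $d$, the required decay of $f$ can be obtained in one shot rather than by induction on $k$: scale-invariant interior estimates for the Stokes system on $Q_r\subset Q_{2r}$, with the drift $(a\cdot\nabla)u$ treated as a lower-order perturbation (absorbable because $r\Vert a\Vert_{L^\infty}\to 0$), give $\Vert\nabla u\Vert_{L^q(Q_r)}\le C r^{d-1+(n+2)/q}$ directly from $|u|\le C|(x,t)|^{d}$. Hence $\Vert f\Vert_{L^q(Q_r)}\le C r^{d-1+(n+2)/q}\le C r^{d-2+\alpha+(n+2)/q}$ for every $\alpha\in(0,1)$, and a single application of Theorem~\ref{T01} finishes. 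The inductive scheme you sketch is the right tool when the vanishing order of $u$ is \emph{not} known a priori (as in a strong unique continuation argument), but here it is part of the hypothesis.

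One small point in your bootstrap as written: the exponent $k-1$ in $\Vert f\Vert_{L^q(Q_r)}\le Cr^{k-1+(n+2)/q}$ matches \eqref{EQ15} at level $d=k$ (with $\alpha$ close to $1$), not at level $k+1$; reaching level $k+1$ requires first improving the gradient bound by a factor $r^{\alpha}$, which comes from applying interior regularity to $u-P_k$ and using $|u-P_k|\le C|(x,t)|^{k+\alpha}$ together with the fact that $P_k$ itself vanishes to the same order (since $u$ vanishes to order $d>k$). You allude to this with the ``slack $\alpha<1$'' remark, but as stated the step could be read as skipping that improvement.
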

\colb

\startnewsection{The basic results}{sec3}
We start by recalling pointwise estimates on the derivatives of solutions to the
homogeneous heat equation
  \begin{align}
   \partial_t u - \triangle u = 0
   \period
   \label{EQ32}
\end{align}
The fundamental solution is given by
$
\Gamma (x,t) = {{(4\pi t)}^{-n/2}} \exp \left( - \fractext{|x|^2}{4t}\right)
$ for $t>0$ and 
$\Gamma(x,t) =0$ for $t \leq 0$.  
Recall that the derivatives are bounded as
%
  \begin{align}
   |\partial_t^l \partial_{x}^{\mu} \Gamma (x,t) | \leq \frac{C(\mu,l)}{(|x|+ \sqrt{t})^{n+|\mu|+2l}} e^{-|x|^2 /8t} 
   \comma l\in{\mathbb N_0}
   \commaone \mu\in{\mathbb N}_{0}^{n}
   \label{EQ17}
   \period
  \end{align}
First recall that for any solution $u$ 
of \eqref{EQ32} we have
  \begin{align}
   |D_x^{\mu}D_t^l u(x,t)| \leq \frac{C}{(R- |(x,t)|)^{|\mu|+2l}} \sup_{Q_R} |u|
   \comma (x,t) \in Q_{R/2}
  \label{EQ18} 
  \end{align}
where C depends on $ |\mu|+2l$ \cite{Li}.


For completeness, we briefly recall the derivation of the
fundamental solution to the Stokes system \eqref{EQ11}--\eqref{EQ12}.
Let $u(0,\cdot)=u_0$ be the initial condition. 
By uniqueness of solutions, we have
  \begin{align}
   u_k (x,t) & = \int_{\mathbb{R}^n} \Gamma (x-y,t) u_0 (y) \,dy + \int_0^t \int_{\mathbb{R}^n} \Gamma (x-y,t-s) f_k (y,s) \,dy \,ds  
      \nonumber \\& \indeq
        - \int_0^t \int_{\mathbb{R}^n} \Gamma (x-y,t-s) \partial_k p (y,s) \,dy\,ds,
   \label{EQ33}
  \end{align}
for $k=1,\ldots,n$. Using the Fourier transform of both sides in \eqref{EQ14}, we get
  \begin{align*}
   \partial_k p = -R_j R_k f_j 
     \comma k=1,\ldots,n,
  \end{align*} 
where 
  \begin{align*}
  R_j g = \left(\frac{\xi_j}{i|\xi|} \hat{g}
          \right)^{\check{}}
  \end{align*}
denotes the $j$-th Riesz transform, using the
Fourier transform
$\hat f(\xi)=\int f(x)e^{- i \xi \cdot x}\,dx$.
Thus \eqref{EQ33} can be written as
  \begin{align}
   u_k (x,t) = \int_{\mathbb{R}^n} \Gamma (x-y,t) u_0 (y) \,dy + \int_0^t \int_{\mathbb{R}^n} K_{jk} (x-y,t-s) f_j (y,s) \,dy \,ds,
   \label{EQ34}
  \end{align} 
where 
  \begin{equation}
   K_{jk} (x,t)
    = \delta_{jk} \Gamma(x,t) 
        + R_j R_k \Gamma (x,t)
   \comma j,k=1,\ldots,n   
   \period
   \label{EQ47}
  \end{equation}
For each $j,k=1,\ldots, n$, the function $K_{jk}$ solves the heat equation, i.e.,
  \begin{align}
   \partial_t K_{jk} (x,t) - \triangle K_{jk} (x,t) =0 
   \comma t>0
   \period
   \label{EQ35}
\end{align}
Also,
  \begin{align}
   \partial_j K_{jk} (x,t) =0
   \comma j=1,\ldots,n   
   \period
   \label{EQ36}
\end{align}
Furthermore, we have the estimate
  \begin{align}
   |D_x^{\mu}D_t^l K_{jk} (x,t)| \leq \frac{C}{|(x,t)|^{n+ |\mu| + 2l}}
   \comma l\in{\mathbb N}_0
   \comma \mu\in{\mathbb N}_{0}^{n}
   \label{EQ25}
  \end{align}
where $C$ depends on  $|\mu|$ and $l$
\cite{FJR,L,S}.

\startnewsection{Proof of the Main Theorem}{sec4}
In the next lemma, we
construct a solution of the system 
\eqref{EQ11}--\eqref{EQ12} which vanishes  with a certain prescribed
degree. 

\cole
\begin{Lemma}
\label{L01}
Assume that $f=(f_1,\ldots,f_n) \in L^q {(Q_1)}$, 
where $q > 1+ n/2$, satisfies  \eqref{EQ15}. 
Then there exists $u=(u_1, \ldots, u_n)\in W_q^{2,1} (Q_1)$
which solves \eqref{EQ11}--\eqref{EQ12} 
and satisfies
  \begin{align}
   \|u\|_{L^q (Q_1)} ,  \|u\|_{W^{2,1}_q (Q_{3/4})} \leq C \gamma 
   \period
   \label{EQ10}
  \end{align}
Furthermore,
  \begin{align}
   |u_k(x,t)| \leq C \gamma |(x,t)|^{d+ \alpha}
   \comma  |(x,t)|< 1/2
   \commaone k=1, \ldots,n
   \label{EQ07}
  \end{align}
where the constant $C$ depends on $n$, $d$, and $\alpha$.  
\end{Lemma}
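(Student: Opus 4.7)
The plan is to adapt Han's approach from \cite{H2} by taking $u$ to be a Stokes convolution of $f$ in which the kernel is modified so as to annihilate its low-order Taylor coefficients at the origin. Extending $f$ by zero outside $Q_1$ as in \eqref{EQ37}, one first sets
\begin{equation*}
v_k(x,t) = \sum_{j=1}^n \int\!\!\int K_{jk}(x-y,t-s) f_j(y,s)\,dy\,ds,
\end{equation*}
which solves \eqref{EQ11}--\eqref{EQ12} globally, and then defines $u = v - P$ with
\begin{equation*}
P_k(x,t) = \sum_{j=1}^n \sum_{|\mu|+2l \le d} \frac{x^\mu t^l}{\mu!\,l!} \int\!\!\int (\partial_x^\mu \partial_t^l K_{jk})(-y,-s) f_j(y,s)\,dy\,ds,
\end{equation*}
obtained by formally Taylor-expanding $K_{jk}(\cdot-y,\cdot-s)$ in $(x,t)$ about the origin up to parabolic degree $d$ and integrating against $f_j$. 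The coefficient integrals converge absolutely: \eqref{EQ25} bounds the integrand by $C|(y,s)|^{-n-|\mu|-2l}|f(y,s)|$, and a dyadic decomposition using \eqref{EQ15} gives a geometric sum with positive rate $d+\alpha-|\mu|-2l\ge\alpha>0$.

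The next step is to verify that $u$ still solves \eqref{EQ11}--\eqref{EQ12} with the same pressure as $v$. The commutation identities
\begin{equation*}
\partial_{x_k} T_d^{(x,t)=0}[\phi] = T_{d-1}^{(x,t)=0}[\partial_{x_k}\phi], \qquad (\partial_t-\Delta) T_d^{(x,t)=0}[\phi] = T_{d-2}^{(x,t)=0}[(\partial_t-\Delta)\phi]
\end{equation*}
for the parabolic Taylor polynomial follow by direct index shifting. Applied to $\phi = K_{jk}(\cdot-y,\cdot-s)$, they yield $\nabla\cdot P \equiv 0$ and $(\partial_t-\Delta) P \equiv 0$, provided that $\sum_k \partial_{x_k}K_{jk}\equiv 0$ and that $K_{jk}(\cdot-y,\cdot-s)$ solves the heat equation in a neighborhood of the origin for every $(y,s)\ne (0,0)$ (the exceptional set having measure zero in the source integration). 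The first identity follows from \eqref{EQ47} via the Fourier-symbol computation $R_j(-\Delta)^{1/2}\Gamma = -\partial_{x_j}\Gamma$; the second is \eqref{EQ35}. Consequently $u$ is divergence-free and satisfies $u_t - \Delta u + \nabla p^v = f$.

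For the pointwise decay \eqref{EQ07}, fix $\rho := |(x,t)| < 1/2$ and split $u_k = I_{\text{out}} + I_{\text{in}}$ according to $|(y,s)| \gtrless 2\rho$. In the outer region, the parabolic Taylor remainder estimate
\begin{equation*}
|K_{jk}(x-y,t-s) - T_d^{(x,t)=0}[K_{jk}(\cdot-y,\cdot-s)](x,t)| \lesssim \rho^{d+1}|(y,s)|^{-n-d-1},
\end{equation*}
obtained by writing the remainder in integral form along the parabolic curve $\theta\mapsto(\theta x,\theta^2 t)$ and invoking \eqref{EQ25}, combined with dyadic shells $\{2^k\rho \le |(y,s)| < 2^{k+1}\rho\}_{k\ge 1}$ and H\"older against \eqref{EQ15}, produces a geometric sum with ratio $2^{\alpha-1}<1$ totaling $C\gamma\rho^{d+\alpha}$. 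In the inner region, write $I_{\text{in}} = v_k^{\text{in}} - P_k^{\text{in}}$. For $v_k^{\text{in}}$, the Stokes convolution of $f\chi_{Q_{2\rho}}$, rescale by $\rho$ in space and $\rho^2$ in time so that $\tilde f(\tilde y,\tilde s) = \rho^2 f(\rho\tilde y,\rho^2\tilde s)$ satisfies $\|\tilde f\|_{L^q(Q_2)}\le C\gamma\rho^{d+\alpha}$ by \eqref{EQ15}; Solonnikov's $L^q$ maximal regularity followed by the parabolic Sobolev embedding $W^{2,1}_q \hookrightarrow L^\infty$ (valid because $q>(n+2)/2$) yields $|v_k^{\text{in}}(x,t)| \lesssim \gamma\rho^{d+\alpha}$ after unwinding the rescaling. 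For $P_k^{\text{in}}$, each monomial contributes $|x^\mu t^l|/(\mu!\,l!) \le C\rho^{|\mu|+2l}$ multiplied by an inner dyadic integral bounded by $C\gamma\rho^{d+\alpha-|\mu|-2l}$, summing to $\lesssim \gamma\rho^{d+\alpha}$. The global norm bounds \eqref{EQ10} then follow from Solonnikov maximal regularity applied to $v$ together with the boundedness of the polynomial $P$ on $Q_1$. The main obstacle is the inner region, where the Stokes kernel is singular and Taylor remainder estimates break down; this is circumvented by the parabolic rescaling and the Sobolev embedding above.
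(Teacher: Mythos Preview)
Your construction and overall decomposition coincide with the paper's: set $u$ equal to the Stokes convolution of $f$ minus the integrated parabolic Taylor polynomial of the kernel, verify that the subtracted polynomial is a divergence-free caloric, and estimate the difference by splitting the source integral into an inner ball $\{|(y,s)|\le 2|(x,t)|\}$ and an outer annulus. The verification of $\nabla\cdot P=0$ and $(\partial_t-\Delta)P=0$, the outer Taylor-remainder dyadic sum, and the inner polynomial piece $P_k^{\text{in}}$ are handled exactly as in the paper (the paper writes out the outer remainder via a standard Taylor expansion in $|\mu|+l$ rather than along the parabolic curve $\theta\mapsto(\theta x,\theta^2 t)$, which produces a sum over parabolic orders $d+1,\ldots,2(d+1)$ instead of a single term, but the arithmetic is the same).

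The one genuine difference is your treatment of the inner convolution piece $v_k^{\text{in}}$. The paper bounds it directly by H\"older and the pointwise kernel estimate \eqref{EQ25}: since $q>1+n/2$ implies $nq'<n+2$, the integral $\int_{|(y,s)|<3\rho}|(y,s)|^{-nq'}\,dy\,ds$ converges and scales as $\rho^{(n+2)/q'-n}$, which combined with $\|f\|_{L^q(Q_{2\rho})}\le\gamma(2\rho)^{d-2+\alpha+(n+2)/q}$ gives exactly $C\gamma\rho^{d+\alpha}$. Your route via parabolic rescaling, Solonnikov maximal regularity, and the embedding $W^{2,1}_q\hookrightarrow L^\infty$ is also correct and uses the hypothesis $q>(n+2)/2$ at the embedding step; it is simply more machinery for the same conclusion. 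The paper's direct argument is shorter here, but your version has the merit of making transparent why the threshold $q>1+n/2$ is the natural one.
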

\colb

\begin{proof}[Proof of Lemma~\ref{L01}]
We start by setting
  \begin{align}
   w_k (x,t) 
     &= \int K_{jk} (x-y, t-s) f_j (y,s) \,dy \,ds
     \nonumber\\&
     = \int_{|(y,s)|<1} K_{jk} (x-y, t-s) f_j (y,s) \,dy \,ds
   \comma k=1,\ldots,n
   \label{EQ21}
  \end{align}
and
  \begin{equation}
   p
   = \Delta^{-1}\nabla\cdot f
   =(\xi_j \hat f_j/i|\xi|^2)\check{\;}
   \period
   \label{EQ49}
  \end{equation}
Then we have 
  \begin{align}
   \partial_t w_k - \triangle w_k 
    {+ \partial_{k} p}
   = f_k
   \comma k=1,\ldots,n
  \label{EQ22}
 \end{align}
and
   \begin{align}
    \|w\|_{W_q^{2,1} (Q_1)} & \leq C \|f \|_{L^q (Q_1)}
    \leq C \gamma
   \period
   \label{EQ23}
   \end{align}
Now, we consider the Taylor expansion of $K_{jk} (x-y, t-s)$ around $(0,0)$. 
Let 
$|(y,s)|<1$ be such that $s\neq0$.
Denote by $K_{jk}^m$  the $m$-th order terms, i.e.,
  \begin{align}
   K_{jk}^m (x,y; t,s) = \sum_{|\mu|+ 2l = m} D_x^{\mu} D_t^l K_{jk} (-y,-s) \frac{x^{\mu} t^l}{\mu ! l!}\period  
   \label{EQ24}
   \end{align}
It is easy to check that $K_{jk}^m$ solves 
the heat equation for each $j,k$, i.e.,
 \begin{align*}
 \partial_t K_{jk}^m (x,y; t,s) - \triangle_x K_{jk}^m (x,y; t,s) = 0 
 \end{align*}
 for any $(y,s)$. 
Let
  \begin{align}
   v_k (x,t) = \int_{|(y,s)|<1} \sum_{m=0}^{d} K_{jk}^m (x,y; t,s) f_j (y,s) \,dy \,ds
   \comma k=1,\ldots,n
   \period
   \label{EQ26}
  \end{align}
Each $v_k$ is a polynomial of degree less than or equal to $d$ and
it satisfies
  \begin{align}
   \partial_t v_k (x,t) - \triangle v_k (x,t) =0
   \period
  \label{EQ27}
  \end{align}
Moreover, we have 
  \begin{equation}
   \nabla\cdot v=0   
   \label{EQ50}
  \end{equation}
Indeed,
we may write
  \begin{align*}
   \partial_k v_k = \int_{|(y,s)|<1} \sum_{m=1}^d \sum_{\substack{|\mu|+ 2l =m \\ \mu_k >0}} D_x^{\mu} D_t^l K_{jk} (-y, -s) \frac{x^{  \mu-e_k  }t^l}{(\mu-e_k)! l!} f_j (y,s) \,dy \,ds,
  \end{align*}
where $e_k$ is the standard $k$-th unit vector in ${\mathbb R}^{n}$.
Note that
  \begin{align*}
   \sum_{\substack{|\mu|+ 2l =m \\ \mu_k >0}} D_x^{\mu} D_t^l K_{jk} (-y, -s) \frac{x^{{\mu-e_k}}t^l}{(\mu-e_k)! l!}  = \sum_{|\bar\mu| + 2l = m-1} \partial_k D_x^{\bar{\mu}} D_t^l K_{jk} (-y, -s) \frac{x^{\bar{\mu}}t^l}{\bar{\mu}! l!}
  \period
  \end{align*}
Using 
$\partial_{k} K_{jk}=0 $
for $j=1,\ldots,n$, we get $\nabla\cdot v=0$.
Now, set 
  \begin{align}
   u_k (x,t) & = w_k (x,t) -  v_k (x,t) 
  \nonumber\\ 
  \label{EQ29}
  & = \int_{|(y,s)| <1} 
        \left( 
              K_{jk} (x-y, t-s) - \sum_{m=0}^d K_{jk}^m (x,y;t,s) 
        \right) f_j (y,s) \,dy \,ds
   \comma k=1,\ldots,n
  \end{align}
and note that we have
  \begin{align}
   &\partial_t u_k 
        - \triangle u_k 
        + \partial_k p  
     = f_k 
   \comma k=1,\ldots,n
   \period
\end{align}
We now check the condition \eqref{EQ12}.  
Since
  \begin{align*}
   \partial_k w_k = \int_{|(y,s)|<1} \partial_k K_{jk} (x-y, t-s) f_j (y,s) \,dy \,ds
   =0
   \period
  \end{align*}
where we used $\partial_{k} K_{jk}=0$ for $j=1,\ldots,n$,
we get $\nabla \cdot u=0$.

Next, we claim that 
  \begin{align}
   |u(x,t)| \leq C \gamma |(x,t)|^{d+\alpha} 
   \comma  |(x,t)| \leq \frac{1}{2}\period 
  \label{EQ30}
  \end{align}
Fixing $|(x,t)|\le 1/2$, we split the integral 
on the far right side of \eqref{EQ29}
into three parts
  \begin{align*}
    I_1 & = \int_{|(y,s)|\le 2 |(x,t)|} K_{jk} (x-y, t-s) f_j (y,s) \,dy \,ds \\
    I_2 & = - \int_{|(y,s)|\le 2 (x,t)} \sum_{m=0}^{d} K_{jk}^m (x,y; t,s) f_j (y,s) \,dy \,ds \\
    I_3 & = \int_{2 |(x,t)|< |(y,s)|<1} \left(K_{jk} (x-y, t-s) - \sum_{m=0}^d K_{jk}^m (x,y; t,s)\right) f_j (y,s) \,dy \,ds\period 
  \end{align*}
By a hypothesis, $q > 1 + \fractext{n}{2}$. Therefore, 
by  H\"older's inequality and \eqref{EQ25},
  \begin{align*}
   |I_1| & \leq C\sum_{j=1}^n \left( \int_{|(y,s)|\le 2 |(x,t)|} \frac{dy \,ds}{|(x-y, t-s)|^{nq'}} \right)^{1/q'} \left( \int_{|(y,s)|\le 2|(x,t)|} |f_j (y,s)|^q \,dy \,ds \right)^{1/q} \\
   & \leq C\sum_{j=1}^n \left( \int_{|(y,s)|< 3|(x,t)|} \frac{dy\,ds}{|(y,s)|^{nq'}}  \right)^{1/q'} \left( \int_{|(y,s)|< 2|(x,t)|} |f_j (y,s)|^q \,dy\,ds\right)^{1/q} \\
   & \leq C  \gamma |(x,t)|^{\fractext{(n+2)}{q'} -n} |(x,t)|^{d-2 + \alpha + \fractext{(n+2)}{q}} \\
   & = C \gamma |(x,t)|^{d+ \alpha},
  \end{align*}
where $q'=(q-1)/q$.
Similarly, using \eqref{EQ25} we estimate
  \begin{align}
   |I_2| & \leq C \sum_{j=1}^n \sum_{k=0}^d |(x,t)|^k \int_{|(y,s)| < 2 |(x,t)|} \frac{|f_j (y,s)|}{|(y,s)|^{n+k}} \,dy \,ds \nonumber\\
   & \leq C \sum_{j=1}^n \sum_{k=0}^d |(x,t)|^k  \sum_{i=0}^{\infty} \int_{\fractext{|(x,t)|}{2^i} < |(y,s)| < \fractext{|(x,t)|}{2^{i-1}}} \frac{|f_j (y,s)|}{|(y,s)|^{n+k}} \,dy \,ds \nonumber\\
   & \leq C  \gamma \sum_{k=0}^d |(x,t)|^k \sum_{i=0}^{\infty} \left( \frac{|(x,t)|}{2^i}\right)^{d-k +\alpha} \nonumber\\
   & \leq C  \gamma |(x,t)|^{d+\alpha}
  \period
  \end{align}
In order to estimate $I_3$, we expand $K_{jk} (x-y, t-s)$ into Taylor series around $(x,t) = (0,0)$. For each $j,k = 1, \ldots, n$, we have
  \begin{align}
   K_{jk} (x-y, t-s) = \sum_{i=0}^d \sum_{|\mu| + l =i} D^{\mu,l} K_{jk} (-y, -s) \frac{x^{\mu}t^l}{\mu ! l!} + \sum_{|\mu|+l= d+1} D^{\mu, l} K_{jk} (\xi x -y, \eta t-s) \frac{x^{\mu}t^l}{\mu! l!}, 
   \label{EQ31}
  \end{align}
where $0< \xi = \xi (x,t; y,s)$, 
$\eta = \eta (x,t; y,s) < 1$. Therefore,
  \begin{align*}
    &K_{jk} (x-y, t-s) - \sum_{m=0}^d K_{jk}^m (x,y; t,s) \\ 
    & \indeq = \sum_{i=0}^d \sum_{\substack{|\mu| +l=i \\ |\mu|+2l \geq d+1}} 
     D^{\mu, l} K_{jk}(-y,-s) \frac{x^{\mu} t^l}{\mu ! l! } 
      + \sum_{|\mu|+ l = d+1} D^{\mu,l} K_{jk} (\xi x -y, \eta t-s) \frac{x^{\mu} t^l}{\mu ! l!}\period
  \end{align*}
Using  the bound on $|\partial_x^{\mu} \partial_t^l K_{jk} (x,t) |$, the difference above can be estimated with
  \begin{align}
   C \sum_{ i= d+1}^{2(d+1)} \left( \frac{1}{|(y,s)|^{n+i}} + \frac{1}{|(\xi x -y, \eta t -s)|^{n+i}}\right) |(x,t)|^i\period
  \end{align}
As we assumed $2|(x,t)|< |(y,s)|$, we get 
  \begin{align*}
   2|(\xi x -y, \eta t -s)|> |(y,s)|\period
  \end{align*}
Now we may estimate
  \begin{align*}
   |I_3| & \leq \int_{2|(x,t)|< |(y,s)|<1} 
     \left|K_{jk} (x-y, t-s) - \sum_{m=0}^d K_{jk}^m (x,y; t,s)\right| |f_j(y,s)| \,dy \,ds, \\
   & \leq C \sum_{i= d+1}^{2(d+1)}|(x,t)|^i  \int_{2|(x,t)|< |(y,s)|< 1}  \sum_{j=1}^n \frac{|f_j (y,s)|}{|(y,s)|^{n+i}} \,dy \,ds, \\
& \leq  C\sum_{i= d+1}^{2(d+1)} |(x,t)|^i \sum_{u=1}^M \int_{2^u |(x,t)|< |(y,s)|< 2^{u+1} |(x,t)|} \sum_{j=1}^n \frac{|f_j (y,s)|}{|(y,s)|^{n+i}} \,dy \,ds, \\
& \leq C\sum_{i= d+1}^{2(d+1)} |(x,t)|^i \sum_{u=1}^M (2^u |(x,t)|)^{-(n+i)} 
   \sum_{j=1}^n \|f_j \|_{L^q (Q_{{2^{u+1} |(x,t)|}})}  
   \int_{2^u |(x,t)|< |(y,s)|< 2^{u+1}}  \,dy \,ds 
  \end{align*}
from where
  \begin{align*}
   |I_3| & \leq   \sum_{i= d+1}^{2(d+1)} |(x,t)|^i \sum_{u=1}^M C  \gamma (2^u |(x,t)|)^{d+\alpha -i}, \\
   & \leq C  \gamma |(x,t)|^{d+\alpha} \sum_{i= d+1}^{2(d+1)} \sum_{u=1}^M \frac{1}{(2^{i-d-\alpha})^u}, \\ 
   &\leq C  \gamma |(x,t)|^{d+\alpha},
  \end{align*}
which proves \eqref{EQ07}. For \eqref{EQ10} recall that we have the same estimate on 
$\|w_k\|_{L^q (Q_1)}$ by \eqref{EQ23}. Furthermore, we may show that for $k=1,\ldots,n$ 
  \begin{align}
   |v (x,t)| \leq C \gamma
   \comma (x,t) \in Q_1, 
    \label{EQ38}
  \end{align}
by following the same approach we took in estimating $I_2$. Combining \eqref{EQ38} 
with \eqref{EQ18} and \eqref{EQ27}, we get
  \begin{align}
   \|v\|_{W^{2,1}_q (Q_{r})} \leq C(r) \gamma 
   \comma r<1,
   \label{EQ39}
  \end{align}
which completes the proof of \eqref{EQ10}.
\end{proof}

\begin{proof}[Proof of Theorem~\ref{T01}]
Suppose $(u,p)$ solves \eqref{EQ11}--\eqref{EQ12}.
In Lemma~\ref{L01}, we have already constructed  a solution $(\tilde{u}, \tilde{p})$ of \eqref{EQ11}--\eqref{EQ12} 
with $\tilde{u}_k \in W_q^{2,1} (Q_1)$ for each $k=1, \ldots, n$, such that
  \begin{align*}
   |\tilde{u}_k (x,t)| \leq C \gamma |(x,t)|^{d+\alpha} 
   \comma (x,t) \in Q_{1/2} 
  \end{align*}
for each $k= 1, \ldots, n$, where $C$ is a constant depending on $n$, $d$, and $\alpha$. Also, we have 
  \begin{align}
   \|\tilde{u}_k \|_{L^q (Q_1)},
   \|\tilde{u}_k \|_{W^{2,1}_q (Q_{1/2})}
   \leq C \gamma
  \comma k=1,\ldots,n 
   \period
   \label{EQ09}
  \end{align}
Then we set $U = u- \tilde{u}$. Note that $U$ solves the system
  \begin{align}
   & U_t - \triangle U + \nabla (p- \tilde{p}) = 0, 
   \label{EQ40}\\
   & \nabla \cdot U =0
   \period
   \label{EQ41}
  \end{align}
Furthermore, we consider the vorticity equation. Let $W = \nabla \times U$ 
denote the curl of $U$, i.e.,
$W_{i,j}=\partial_{i} U_j - \partial_{j}U_i$. 
Then $W= [W_{i,j}]_{n \times n}$ satisfies the heat equation
  \begin{align*}
    W_t - \triangle W = 0\period 
  \end{align*} 
As $W_{i,j} = \partial_i (u_j - \tilde{u}_j) - \partial_j (u_i - \tilde{u}_i) $, we obtain
  \begin{align}
    \sum_{i,j=1}^n \|W_{i,j}\|_{L^q (Q_r)} & \leq \sum_{k=1}^n \| u_k\|_{W^{2,1}_q(Q_r)} + \sum_{k=1}^n \| \tilde{u}_k \|_{W^{2,1}_q(Q_r)} 
     \leq C \gamma + \sum_{k =1}^n \|u_k\|_{W^{2,1}_q (Q_1)} 
   \label{EQ42}
\end{align} 
for any $r<1$.
By expanding $U_k$ into Taylor series in $x$, we obtain 
  \begin{align}
   U_k (x,t) = P^k_{d,t} (x) + R^k_{d,t} (x) \comma k=1, \ldots, n,
   \label{EQ43}
  \end{align}
where
  \begin{align}
   P^k_{d,t}(x) & = \sum_{i=0}^d \sum_{|\alpha|=i} D^{\alpha}_x U_k (0,t) \frac{x^{\alpha}}{\alpha !} 
   \label{EQ44}
  \end{align}
and
  \begin{align}
   R^k_{d,t} (x) & = \sum_{|\alpha| = d+1} D^{\alpha}_x U_k (\xi (x,t) x,t) \frac{x^{\alpha}}{\alpha !} 
   \label{EQ45}
\end{align}
where $0< \xi(x,t) \leq 1$. Let $(x,t) \in Q_{1/2}$. Note that
  \begin{align*}
   |R^k_{d,t} (x)| & \leq C \sum_{|\alpha| = d+1} |D^{\alpha}_x U_k (\xi(x,t) x ,t)| |x^{\alpha}| \\
   & \leq C |x|^{d+1} \sum_{|\alpha|=d+1} |D^{\alpha}_x U_k (\xi (x,t)x,t)| \\
   & \leq C |(x,t)|^{d+1} \sum_{|\alpha|= d+1} |D^{\alpha}_x U_k (\xi (x,t) x,t) |
   \period
  \end{align*} 
Also, selecting $1/2 < r_1 < r_2 < 3/4$, we get
  \begin{align*}
    & |D^{\alpha}_x U_k (\xi (x,t)x,t)| \leq \|D^{\alpha}_x U_k (\cdot, t) 
                                         \|_{L^{\infty}_x (B_{1/2} (0,0))} \\ 
   & \indeq \leq C \| D^{\alpha}_x U_k (\cdot, t)\|^{1/2}_{L^q_x (B_{1/2} (0,0))} \| D^{\alpha +n}_x U_k (\cdot, t)\|^{1/2}_{L^q_x (B_{1/2} (0,0))} + C \| U_k (\cdot, t)\|_{L^q_x (B_{1/2} (0,0))} 
\\ 
   & \indeq \leq C \left( \|  U_k (\cdot,t)\|_{L^q_x (B_{1/2} (0,0))}  +  \| D^{\alpha} U_k (\cdot, t)\|_{L^q_x (B_{1/2} (0,0))}   + \| D^{\alpha+n} U_k (\cdot,t)\|_{L^q_x (B_{1/2} (0,0))}  \right) \\ 
   & \indeq \leq C \left( \|  u_k (\cdot,t) - \tilde{u}_k (\cdot,t)\|_{L^q_x (B_{1/2} (0,0))} + \sum_{i,j = 1}^n  \| D^{\alpha -1} W_{i,j} (\cdot, t) \|_{L^q_x (B_{r_1} (0,0))}  \right. \\
   & \indeq\indeq\indeq\indeq\indeq \left. + \sum_{i,j =1}^n \| D^{\alpha +n -1} W_{i,j} (\cdot, t)\|_{L^q_x (B_{r_1} (0,0))}  \right)\period 
  \end{align*}
We may further bound 
  \begin{align*}
   \|D^{\beta} W_{i,j}\|_{L^{\infty}_t  [0, 1/2] L^q_x (B_{r_1} (0,0))}  \leq C \|W_{i,j}\|_{L^q_t  [0, 3/4] L^q_x (B_{3/4}(0,0))}
  \end{align*}
for any multiindex $\beta$. By \eqref{EQ09} and \eqref{EQ42} we have the desired bound on $R^k_{d,t}$. Going back to \eqref{EQ43}, we obtain the extension
  \begin{align*}
   |u_k (x,t) - P^k_{d,t} (x)| & \leq |\tilde{u}_k (x,t)| + |R^k_{d,t} (x,t)| \\
   \indeq & \leq C \gamma |(x,t)|^{d+\alpha} + C \left(\gamma + \sum_{k=1}^n \|u_k\|_{W^{2,1}_q (Q_1)} \right) |(x,t)|^{d+1},
  \end{align*}
for any $|(x,t)| \leq 1/2$.
Furthermore, $P$ satisfies \eqref{EQ16} and \eqref{EQ01}.  
Taking the divergence of \eqref{EQ44} we get
  \begin{align*}
   \partial_k P_{d,t} ^k & = \sum_{i=0} ^{d-1} \sum_{|\alpha|= i-1} \partial_k  D^{\alpha} _x U_k (0,t) \frac{x^{\alpha}}{\alpha !} 
    =0,
  \end{align*}
as $U = u- \tilde{u}$ is divergence free.  Similarly,
  \begin{align*}
   & \partial_t P^k _{d,t} (x) - \triangle P^k _{d,t} (x)  \\ 
   & \indeq = \sum_{i=0} ^d \sum_{|\alpha|=i} D^1 _t D^{\alpha} _x U_k (0,t) \frac{x^{\alpha}}{\alpha !} - \sum_{i=0} ^{d-2} \sum_{|\alpha|=i} \partial_k ^2 D^{\alpha} _x U_k (0,t) \frac{x^{\alpha}}{\alpha !}  \\
  & \indeq = \left( \sum_{i=0} ^{d-2} \sum_{|\alpha|=i} D^{\alpha} _x (\partial_k p - \partial_k \tilde{p}) \right) + \sum_{i={d-1}} ^d \sum_{|\alpha|=i} D^1 _t D^{\alpha} _x U_k (0,t) \frac{x^{\alpha}}{\alpha !},
  \end{align*}
which proves \eqref{EQ16}.
\end{proof}

\begin{proof}[Proof of Theorem~\ref{T01}]
The proof of this result follows that of Theorem~\ref{T01} and it is
thus omitted.
\end{proof}

\nnewpage
\section*{Acknowledgments} 
The authors were supported in part by the NSF grant DMS-1311943.

\end{document}